\pdfoutput=1
\documentclass{amsart}\usepackage{amsmath}\usepackage{graphicx,color}\theoremstyle{definition}\theoremstyle{plain}\newtheorem{theorem}{Theorem}\newtheorem{lemma}{Lemma}\theoremstyle{remark}\newtheorem{remark}{Remark}
\begin{document}
\author{Noboru Ito}\address{Graduate School of Mathematical Sciences, ICMS, iBMath, The University of Tokyo, 3-8-1, Komaba Meguro-ku Tokyo, 153-8914, Japan}\email{noboru@ms.u-tokyo.ac.jp}
\author{Shosaku Matsuzaki}\address{Faculty of Education and Integrated Arts and Sciences, Waseda University, 1-6-1, Nishi Waseda, Shinjuku-ku, Tokyo, 169-8050, Japan}
\address{(Current address: Learning Support Center, Faculty of Engineering, Takushoku University, Tatemachi 815-1, Hachioji-shi, Tokyo, 193-0985, Japan)}
\email{smatsuza@ner.takushoku-u.ac.jp}\author{Kouki Taniyama}
\address{Department of Mathematics, School of Education, Waseda University, 1-6-1, Nishi Waseda, Shinjuku-ku, Tokyo, 169-8050, Japan}\email{taniyama@waseda.jp}\title[Circle arrangements of link projections]{Circle arrangements of link projections}\subjclass[2010]{57M25, 57Q35}\keywords{Circle arrangement, link diagram, link projection}\maketitle
\begin{abstract}
An oriented link projection is the image of a generic immersion of oriented circles into the 2-sphere. The circle arrangement of a link projection is a disjoint union of unoriented circles on the 2-sphere obtained by orientation-incoherent smoothing at each crossing point. We show that two oriented link projections have the same circle arrangement if and only if they are transformed into each other by certain local moves. We also show that every odd (resp.~even) component link has a link projection whose circle arrangement consists of exactly one (resp.~two) circles.\end{abstract}

		
\section{Introduction}
A {\it link} is a closed 1-manifold smoothly embedded into the 3-sphere $S^3$.
A {\it knot} is a link with exactly one component.
An oriented {\it link projection} is the image of a generic immersion of a disjoint union of oriented circles to the 2-sphere $S^2$.
A {\it diagram} of an oriented link, or an oriented {\it link diagram}, is an oriented link projection with over/under information of all double points.
A {\it crossing point} is a double point of a link projection or a link diagram.
For an arbitrary oriented link projection $P$, if the operation $A^{-1}$ shown in Figure~\ref{nosmoothing} (cf.~\cite{ItS}) is applied to every crossing point of $P$ and the orientation is forgotten, we have an isotopy class of unoriented simple closed curves on $S^{2}$.
The isotopy class is called a {\it circle arrangement} of $P$ (cf.~\cite{IT}) and denoted by $\tau(P)$. See Figure \ref{PtauP}.
For a diagram $D$ of an oriented link, $\tau(P_D)$ is denoted by $\tau(D)$, where $P_D$ is the oriented link projection obtained by forgetting over/under information of all crossing points of $D$.
		\begin{figure}[h!]\includegraphics[width=12.5cm]{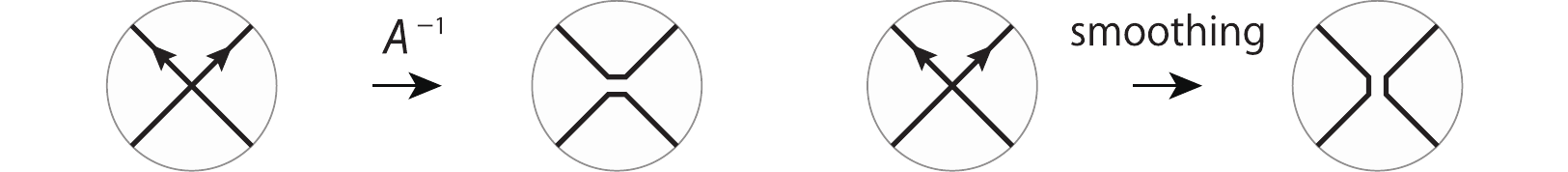}\caption{The operation $A^{-1}$ and the smoothing for a crossing point of an oriented link projection are depicted respectively.}\label{nosmoothing}\end{figure}
		
		\begin{figure}[h!]\includegraphics[width=12.5cm]{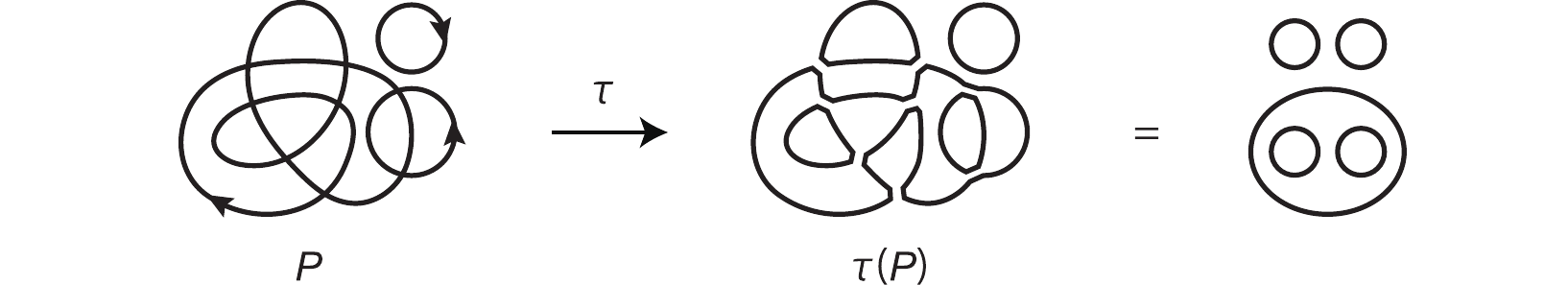}\caption{An oriented link projection $P$ and the circle arrangement $\tau(P)$.}\label{PtauP}\end{figure}
						
Let $P$ be an oriented link projection and let $P'$ be a connected component of $P$. We note that $P'$ may be an oriented link projection which is the image of a generic immersion of two or more circles. A simple closed curve $l$ contained in $P'$ is a {\it polygon} of $P$ if there exists a disk $D$ in $S^2$ such that ${\rm int}(D) \cap P' = \emptyset$ and $\partial D=l$. A polygon is an $i$-{\it gon} if the polygon contains exactly $i$ crossing points ($i \ge 0$).
A polygon $l$ of an oriented link projection is {\it coherent} if
all of the induced orientations on $l$ are mutually coherent.
We note that 1-gons and 0-gons are always coherent.

We define a local deformation {\it $M_i$-move}, denoted by $M_i$, as illustrated in Figure~\ref{Mmov} for each non-negative integer $i$. Each shaded area in Figure~\ref{Mmov} is an annulus on $S^2$. Every $M_i$ is a local move on an annulus, not on a disk as usual. Namely a link projection is unaltered outside the annulus by every $M_i$. Each $M_i$ has the two orientations ($i \ge 1$). Only one pattern of the orientation is illustrated in Figure \ref{Mmov}.\ 
Another one has inverse orientations.

		\begin{figure}[h!]\includegraphics[width=12.5cm]{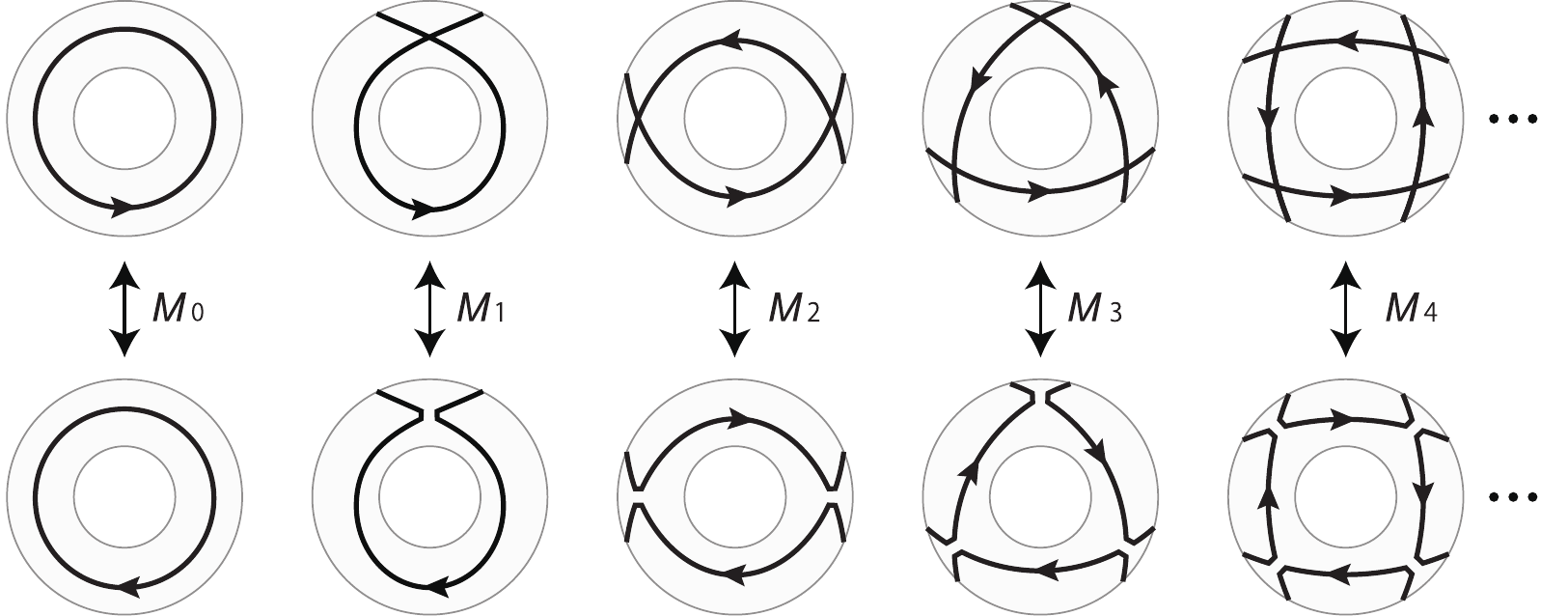}\caption{The $M_i$-move consists of the replacement depicted by $i$-gons ($i\ge0$).
Another type of $M_i$ with the inverse orientations is not illustrated here ($i \ge 1$).}\label{Mmov}\end{figure}
		
For a set $S$ of non-negative integers, oriented link projections $P$ and $Q$ are {\it S-equivalent}, denoted by $P \stackrel{S} {\sim} Q$, if there exists a finite sequence $P_0, P_1, P_2, \ldots, P_m$ of oriented link projections such that $P_0=P$, $P_m=Q$ and for every $k$ with $0 \le k \le m-1$ the oriented link projection $P_k$ is deformed to $P_{k+1}$ by an $M_i$-move with $i \in S$.
By applying operations, each of which is $A^{-1}$, to the diagrams of the top row in Figure~\ref{Mmov}, we have the diagrams of the bottom row. Therefore $P \stackrel{\{ i \}} {\sim} Q$ implies $\tau(P)=\tau(Q)$ for each non-negative integer $i$. 

\begin{theorem}\label{mainthm1}Let $n$ be an integer with $n \ge 3$. For oriented link projections $P$ and $Q$, the following conditions are equivalent.
 \[
 (1) \ \tau(P)=\tau(Q). \ \ \ \ \ \ (2) \ P \stackrel{ \mathbb{Z}_{\ge 0}} {\sim} Q.  \ \ \ \ \ \ (3) \ P \stackrel{ \{ 1, \ n \}} {\sim}  Q.
 \]
 \end{theorem}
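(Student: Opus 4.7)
The implications $(3) \Rightarrow (2)$ and $(2) \Rightarrow (1)$ are already in hand: the first is immediate from $\{1,n\} \subseteq \mathbb{Z}_{\ge 0}$, and the second is the observation in the paragraph just above the theorem that every $M_i$-move preserves the circle arrangement. The content of the theorem is therefore $(1) \Rightarrow (3)$, which I would attack in two independent stages.

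\textbf{Stage A: $(1) \Rightarrow (2)$.} Set $\tau = \tau(P) = \tau(Q)$. Viewing $\tau$ as a fixed system of disjoint unoriented circles on $S^2$, every oriented link projection with circle arrangement equal to $\tau$ arises by choosing a finite set of \emph{pinch sites} at which two local arcs of $\tau$ (possibly belonging to the same circle) are glued together into a crossing, with the orientation data forced so that $A^{-1}$ recovers the local picture. After an ambient isotopy I may assume that $P$ and $Q$ realize the same embedded curves $\tau$ on $S^2$, so that the only difference between them is the two choices of pinch sites. The plan is to induct on a complexity measuring this difference — for instance the total number of pinch sites of $P$ and $Q$ that are not yet matched — using the fact that a discrepancy confined to the interior of a region bounded by a coherent $i$-gon can be removed by a single $M_i$-move. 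An innermost-region argument on $S^2$ relative to the combined configuration of $P \cup Q$ should always produce such a region.

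\textbf{Stage B: every $M_i$-move decomposes into $M_1$- and $M_n$-moves.} I would prove this by establishing elementary identities of the shape ``$M_i$ is realized by an $M_{i+1}$ followed by an $M_1$'': inserting or deleting a monogon along one edge of a coherent $i$-gon is an $M_1$-move that converts the $i$-gon to a coherent $(i \pm 1)$-gon. Iterating this identity upward and downward from $M_n$ produces every $M_j$ with $j \ge 2$, and an ad hoc argument handles $M_0$ (an isolated $0$-gon can be sprouted into a monogon by $M_1$ and then absorbed against an $n$-gon by $M_n$). The hypothesis $n \ge 3$ enters at exactly this step: with only $M_1$ and $M_2$ available, the monogon-insertion procedure fails to produce both orientations of the $M_i$-moves needed for the bootstrap.

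The main obstacle I foresee is Stage A. One must show that any two crossing-decorations of the same arrangement $\tau$ differ, after reduction, by a configuration supported inside an innermost coherent polygon. This is a combinatorial/Euler-characteristic argument on $S^2 \setminus \tau$ together with careful bookkeeping of orientations in the disk bounded by that polygon — the delicate point being to verify that the boundary polygon is always coherent so that an $M_i$-move legitimately applies. Stage B, by contrast, ought to reduce to a finite pictorial verification in the spirit of Figure~\ref{Mmov}.
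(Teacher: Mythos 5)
Your identification of the easy implications is correct, but both of your substantive stages have gaps. In Stage~A your plan is to compare $P$ and $Q$ directly as two systems of ``pinch sites'' on the common arrangement $\tau$ and to cancel unmatched sites region by region; the step you would need --- that an unmatched discrepancy always sits inside a region bounded by a \emph{coherent} polygon and is erased by a single $M_i$-move --- is exactly the part you have not proved, and as stated it misdescribes what an $M_i$-move does: $M_i$ removes the $i$ crossings \emph{on} a coherent $i$-gon, replacing the polygon by crossing-free arcs in an annulus; it does not act on data in the interior of the region. The paper sidesteps the comparison problem entirely: by Lemma~\ref{seifertcircle} (smooth all crossings of a component and take an innermost resulting circle; it bounds a coherent polygon), every projection with a crossing admits some $M_i$-move that strictly decreases the crossing number, so $P$ and $Q$ are each reduced \emph{independently} to crossing-free projections $P'$ and $Q'$. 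These have the same underlying unoriented curves $\tau$, hence differ only in the orientations of circles, which $M_0$-moves adjust. That normal-form reduction is the missing idea in your Stage~A.

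Your Stage~B downward identity (``insert or delete a monogon on an edge to pass between a coherent $i$-gon and a coherent $(i+1)$-gon'') is correct and is precisely Case~1 of Lemma~\ref{lemmaMi} (Figure~\ref{Mdown}); it yields $M_i$ for all $0 \le i \le n$, including $M_0$, so no ad hoc argument is needed there. But ``iterating this identity upward'' fails: to express $M_{k+1}$ through $M_k$ and $M_1$ you would have to delete one corner of a coherent $(k+1)$-gon by an $M_1$-move, and a corner of a polygon is not a monogon, so $M_1$ does not apply. The paper uses a genuinely different composite for the upward direction, producing $M_{k+1}$ from $M_2$, $M_3$ and $M_k$ for $k \ge 3$ (Figure~\ref{Mup}); this is also where the hypothesis $n \ge 3$ actually enters --- one needs $M_3$, which Case~1 supplies only when $n \ge 3$ --- rather than the orientation issue you conjecture. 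Without a correct upward step, your argument establishes $P \stackrel{\{1,n\}}{\sim} Q$ only when every intermediate move has index at most $n$, which is not guaranteed by Stage~A.
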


The number of circles contained in the circle arrangement of an oriented link projection $P$ (resp.~link diagram $D$) is denoted by $\#\tau(P)$ (resp.~$\#\tau(D)$).

\begin{theorem}\label{mainthm2}
Let $L$ be an oriented link which has exactly $n$ components. If $n$ is odd $($resp.~even$)$, then the minimum of $\# \tau(D)$ where $D$ varies over all diagrams of $L$ is one $($resp.~two$)$.\end{theorem}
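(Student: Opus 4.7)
The plan is to prove Theorem~\ref{mainthm2} in two parts: a parity lower bound, and a matching construction.

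For the lower bound, I would first show that $\#\tau(P) \equiv n \pmod{2}$ for every oriented link projection $P$ with $n$ components. Let $s(P)$ denote the number of Seifert circles of $P$ and $c(P)$ the number of crossings. Since flipping the smoothing at a single crossing changes the resulting circle count by exactly $\pm 1$, and the orientation-coherent and orientation-incoherent smoothings differ at all $c(P)$ crossings, we have $\#\tau(P) \equiv s(P) + c(P) \pmod{2}$. On the other hand, Seifert's algorithm produces an orientable surface $\Sigma$ with $\partial\Sigma = L$ and $\chi(\Sigma) = s(P) - c(P)$; writing $\chi(\Sigma) = 2k - 2g - n$, where $k$ is the number of connected components of $\Sigma$ and $g$ its total genus, gives $s(P) + n \equiv c(P) \pmod{2}$. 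Combining yields $\#\tau(P) \equiv n \pmod{2}$, hence $\#\tau(D) \geq 1$ if $n$ is odd and $\#\tau(D) \geq 2$ if $n$ is even.

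For the matching upper bound, I would construct a diagram $D$ of $L$ realizing the minimum. The plan is to start with an arbitrary diagram $D_0$ of $L$ and iteratively reduce $\#\tau$ by Reidemeister-type modifications preserving the link type. Specifically, whenever $\#\tau(D)$ exceeds the parity lower bound (so $\#\tau(D) \geq 3$ for odd $n$, or $\#\tau(D) \geq 4$ for even $n$), I claim there are two circles of $\tau(D)$ whose bounded regions on $S^2$ are adjacent in the natural region-tree of $\tau(D)$, and one can perform a Reidemeister II move in this shared region whose two new crossings, upon incoherent smoothing, connect the two adjacent circles rather than separating them, reducing $\#\tau$ by $2$. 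Iterating produces a diagram achieving the parity minimum.

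The main obstacle will be rigorously verifying the merging step: showing that for every diagram $D$ of $L$ with $\#\tau(D)$ strictly above the parity minimum, there is always a Reidemeister II configuration whose effect on $\tau$ is precisely the merge of two adjacent circles. This requires a careful local analysis of orientations at the chosen region boundary, and may involve preliminary Reidemeister I or II stabilizations to expose the right configuration. An alternative strategy would be to construct the diagram directly from a disk-with-bands presentation of a Seifert surface of $L$, computing $\tau$ of the resulting boundary projection explicitly and verifying it has the minimum number of circles.
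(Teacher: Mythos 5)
Your parity lower bound is correct, and it is a genuinely different route from the paper's: the paper proves $\#\tau(D)\equiv n \pmod 2$ by checking that each Reidemeister move changes $\#\tau$ by $0$ or $\pm 2$ and connecting $D$ to the trivial diagram, whereas your argument via $\#\tau(P)\equiv s(P)+c(P)$ and $\chi(\Sigma)=s(P)-c(P)\equiv n \pmod 2$ for the (orientable) Seifert surface is self-contained and equally valid.

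The gap is in the merging step of your upper bound, and it is not merely a verification issue: the move you propose provably cannot decrease $\#\tau$. Consider creating a Reidemeister II bigon between two arcs lying on distinct circles $A\neq B$ of $\tau(D)$. If the two strands are anti-parallel (the bigon is coherent, the \emph{strong} RI\!I), the orientation-incoherent smoothings of the two new crossings reconnect the four endpoints exactly as before the move, so $\tau$ is unchanged. If the two strands are parallel (the \emph{weak} RI\!I), the incoherent smoothings do merge $A$ and $B$ into one circle --- but they also close off the bigon's two edges into a new small circle, so the net change is $0$; and if the two arcs lie on the \emph{same} circle the change is $0$ or $+2$. This is precisely the content of the paper's Lemma~3(1) and Remark~2: no single RI\!I performed in the crossing-creating direction ever lowers $\#\tau$. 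Consequently your iteration never gets off the ground. The paper's fix is to work with \emph{three} arcs lying on three distinct circles of $\tau(D)$: it first shows (Lemma~4, using RI and $\#\tau$-preserving weak RI\!I moves) that whenever $\#\tau(D)\ge 3$ the diagram can be modified, without changing $\#\tau$, so as to contain a disk meeting three different circles of $\tau(D)$ in coherently oriented outermost arcs, and then applies a specific multi-crossing deformation inside that disk whose net effect on the incoherent smoothing is to fuse the three circles into one, dropping $\#\tau$ by exactly $2$. To repair your proof you would need to replace the two-circle RI\!I merge with some such three-strand (or otherwise composite) local modification, together with an existence argument for the required configuration; your alternative suggestion via a disk-with-bands presentation is not developed enough to substitute for this.
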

		
		
\section{Proof of Theorem~\ref{mainthm1}}\label{sec2}

		\begin{lemma}\label{lemmaMi}
For any integers $n \ge 3$ and $i \ge 0$, $M_i$ is obtained from $M_1$ and $M_n$.\end{lemma}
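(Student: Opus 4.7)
My plan is to fix $n \ge 3$ and show, case by case, that every $M_i$-move ($i \ge 0$) is realized by a finite composition of $M_1$- and $M_n$-moves. The cases $i = 1$ and $i = n$ are trivial by definition, so the content lies in (a) the cases $i \ge 2$ with $i \ne n$, and (b) the case $i = 0$.

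For part (a), my approach is to relate $M_i$ to $M_{i+1}$ (and hence to $M_{i-1}$) by means of a single $M_1$-move performed at a vertex of the polygon. The geometric picture I have in mind is that an $M_1$-move either introduces or absorbs a small monogon adjacent to an existing crossing on the polygon's boundary, and in doing so changes the number of crossings on that boundary by exactly one. Starting from a coherent $i$-gon, I would insert or delete monogons along the boundary to inflate or deflate the polygon until it becomes an $n$-gon; then apply the $M_n$-move; and finally undo the auxiliary $M_1$-moves on the image side. The net effect is exactly the desired $M_i$-move. A straightforward induction on $|i-n|$ formalizes this reduction.

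Part (b) needs separate care, since a $0$-gon is an isolated simple closed curve carrying no crossings and therefore offers no vertex at which to begin the induction of part (a). My plan here is first to apply an $M_1$-move that creates a monogon on the isolated circle, reducing the situation to the $i = 1$ case (already handled), and then after running the argument of part (a) on the enlarged polygon, to undo the initial $M_1$ by its inverse on the image side.

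The main obstacle I anticipate is orientational bookkeeping. Each $M_i$ with $i \ge 1$ admits two orientation patterns, and so does $M_n$; the inductive reduction must ensure that the monogons inserted by the auxiliary $M_1$-moves leave the enlarged polygon with boundary orientations compatible with one of the two allowed patterns of $M_n$. This is a finite check of local pictures, but it must be carried out uniformly in $i$ so that the induction closes, and one must be careful to preserve coherence of the induced orientations at each intermediate step. Once orientation compatibility is verified, the combinatorial reduction described above yields the lemma.
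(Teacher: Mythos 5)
Your inflation argument for $0 \le i \le n$ is essentially the paper's Case~1: there the authors show that $M_k$ is obtained from $M_1$ and $M_{k+1}$ for every $k \ge 0$ (insert a kink by $M_1$ to turn the coherent $k$-gon into a coherent $(k+1)$-gon, apply $M_{k+1}$, and clean up with $M_1$), and descending from $n$ handles all $i \le n$, including $i=0$. So that half, and your separate treatment of the $0$-gon, are fine.

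The gap is in the case $i > n$. Your plan is to \emph{deflate} a coherent $i$-gon to an $n$-gon by ``deleting monogons along the boundary,'' but in the local picture defining $M_i$ there are no monogons to delete: each of the $i$ crossings of the polygon is a crossing between two distinct strands entering the annulus, and an $M_1$-move can only create or absorb a self-kink of a single strand. Equivalently, from ``$M_i$ is obtained from $M_1$ and $M_{i+1}$'' you cannot conclude ``$M_{i+1}$ is obtained from $M_1$ and $M_i$''; the relation is not symmetric, and your phrase ``relate $M_i$ to $M_{i+1}$ (and hence to $M_{i-1}$)'' silently assumes it is. Indeed, if the induction closed in both directions using only $M_1$, every $M_i$ would be generated by $M_1$ alone and the hypothesis $n \ge 3$ in the lemma and in Theorem~\ref{mainthm1} would be superfluous. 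The paper instead handles $i \ge n$ by a genuinely different mechanism: an upward induction showing that $M_{k+1}$ is obtained from $M_2$, $M_3$ and $M_k$ for $k \ge 3$, where $M_2$ and $M_3$ supply the ability to create and cancel crossings between \emph{distinct} strands, something $M_1$ cannot do. Since $n \ge 3$, the downward Case~1 already yields $M_2$ and $M_3$ from $M_1$ and $M_n$, which is exactly where that hypothesis enters. Your proposal needs an analogous ingredient for the ascending direction.
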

		
		\begin{proof}We consider the following two cases.

\vskip 2mm
\noindent
{\bf Case 1.} $0 \le i \le n$.

\noindent
For every non-negative integer $k$, $M_{k}$ is obtained from $M_1$ and $M_{k+1}$ by using the process shown in Figure \ref{Mdown}. Therefore, the claim holds for $i=n, n-1, \ldots, 1, 0$.

\begin{figure}[h!]\includegraphics[width=12.5cm]{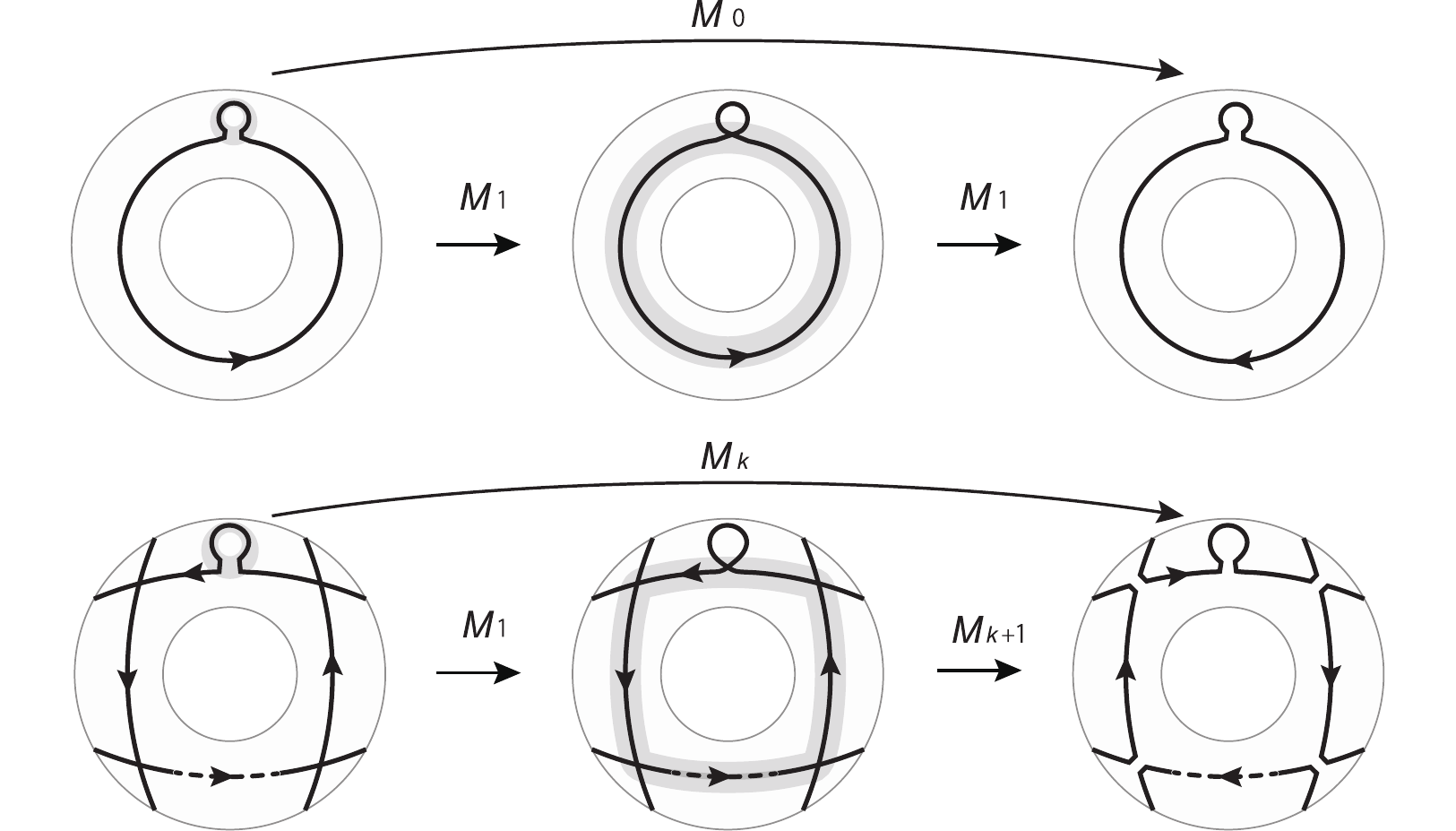}\caption{$M_{k}$ is obtained from $M_1$ and $M_{k+1}$ ($k \ge 0$).}\label{Mdown}\end{figure}

\vskip 2mm
\noindent
{\bf Case 2.} $ i \ge n$.

\noindent
For every integer $k$ with $k \ge 3$, $M_{k+1}$ is obtained from $M_2$, $M_3$ and $M_k$ by using the process shown in Figure \ref{Mup}.

\begin{figure}[h!]\includegraphics[width=12.5cm]{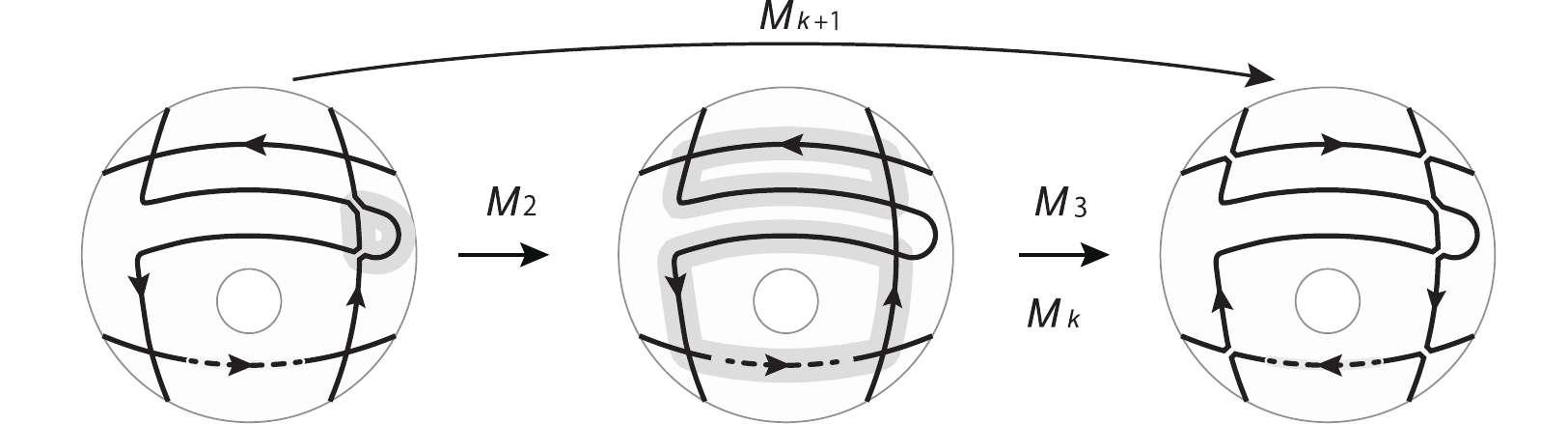}\caption{$M_{k+1}$ is obtained from $M_2$, $M_3$ and $M_{k}$ ($k \ge 3$).}\label{Mup}\end{figure}

Therefore, the claim holds for $i = n, n+1,$ $\ldots$ .\end{proof}
		
		\begin{lemma}[cf.~\cite{ST}]\label{seifertcircle}Any oriented link projection which has at least one crossing point contains a coherent $n$-gon with $n>0$.\end{lemma}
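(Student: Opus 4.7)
The plan is to apply Seifert's orientation-coherent smoothing at every crossing of $P$. First, I replace $P$ by one of its connected components $P'$ that contains a crossing; any coherent polygon of $P'$ bounding a $P'$-free disk is \emph{a fortiori} a coherent polygon of $P$. Performing the coherent smoothing at every crossing of $P'$ produces a disjoint collection of oriented simple closed curves on $S^2$, the Seifert circles of $P'$. Because $P'$ is connected and has a crossing, every Seifert circle of $P'$ passes through at least one crossing: a crossing-free Seifert circle would correspond to a component of $P'$ disjoint from the crossings, contradicting connectedness.

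Since the Seifert circles are pairwise disjoint simple closed curves on $S^2$, they are nested; I would pick an \emph{innermost} Seifert circle $C$ bounding a disk $D\subset S^2$ with $\mathrm{int}(D)$ disjoint from every other Seifert circle of $P'$. A short check using the locality of the smoothing then yields $\mathrm{int}(D)\cap P'=\emptyset$: any arc of $P'$ inside $\mathrm{int}(D)$ belongs to some Seifert circle, which would be forced into $\mathrm{int}(D)$, contradicting the choice of $C$.

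Reading $C$ back in $P'$ by undoing the smoothings it traverses produces a closed curve in $P'$ whose orientations are mutually coherent by the very definition of the Seifert smoothing. If this curve is simple in $P'$, then it is a coherent $n$-gon with $n\ge 1$ bounding $D$, and we are done. The main obstacle is the case in which the curve passes through some crossing $x$ twice, so that all four germs at $x$ lie on $C$ and $C$ has a self-intersection at $x$. In that case I would split $C$ at $x$ into two coherent closed sub-loops, each still passing through at least one crossing (namely $x$ itself), and argue from the local Seifert-smoothing picture that at least one of the sub-loops bounds a sub-disk of $D$ (obtained by pinching at $x$) whose interior remains disjoint from $P'$. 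Iterating this splitting at each self-intersection strictly decreases the number of self-intersections and therefore terminates, yielding a simple coherent polygon containing at least one crossing, as required.
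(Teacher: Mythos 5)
Your proof follows the same route as the paper's: apply the orientation-coherent (Seifert) smoothing to a connected component $P'$ containing a crossing, take an innermost resulting circle $C=\partial D$, and read $C$ back as a coherent polygon of $P'$. Your main line is correct, and it even supplies details the paper leaves implicit (why every Seifert circle of the connected $P'$ meets a crossing, why $\mathrm{int}(D)\cap P'=\emptyset$). The one weak point is the case you call the ``main obstacle,'' and it is in fact vacuous: an oriented Seifert circle can never contain both smoothing arcs of a single crossing. Indeed, after the coherent smoothing the crossing point lies locally to the right of one of the two smoothing arcs and to the left of the other, with respect to the orientations these arcs inherit from $P'$; if both arcs lay on the single embedded oriented circle $C$, the crossing point (which is off $C$) would have to lie in the right-hand complementary disk of $C$ and in the left-hand one simultaneously, a contradiction. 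Hence the curve read back from $C$ is automatically simple, and your iterative splitting argument is not needed --- which is just as well, since as written it is the one step that is asserted rather than established: you do not justify why a sub-loop obtained by pinching at $x$ should still bound a disk whose interior misses $P'$, nor why the iteration preserves that property. Replacing that paragraph by the two-sentence sidedness observation above closes the gap and makes your proof complete.
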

		
		\begin{proof}
Let $P$ be an oriented link projection and let $P'$ be a connected component of $P$ which has at least one crossing point.
We operate smoothing for all crossing points of $P'$ as illustrated in Figure \ref{nosmoothing}. Then, we obtain mutually disjoint circles $C_1, \ldots, C_m$ in $S^2$.
There is a disk $D$ in $S^2$ such that $\partial D$ is one of these circles and ${\rm int}(D) \cap (C_1 \cup \cdots \cup C_m) = \emptyset$.
The oriented simple closed curve corresponding to $\partial D$ is a coherent $n$-gon of $P$ with $n >0$.
		\end{proof}
		
		\begin{proof}[Proof of Theorem \ref{mainthm1}] \ 

\noindent
{\bf (2) $\Rightarrow$ (1)}
Since every $M_i$-move does not change the circle arrangement of an oriented link projection, the claim holds.	
\vskip 2mm
\noindent
{\bf (1) $\Rightarrow$ (2)}
Let $P$ and $Q$ be oriented link projections with $\tau(P)=\tau(Q)$.
Any oriented link projection with at least one crossing point has a coherent $n$-gon with $n > 0$ by Lemma \ref{seifertcircle}.
Then by applying $M_i$ we have a new oriented link projection with fewer crossing points.
By repeating the argument we have an oriented link projection $P'$ (resp.~$Q'$) with no crossing points from $P$ (resp.~$Q$) with
$P \stackrel{\mathbb{Z}_{\ge 1}} {\sim} P'$ (resp. $Q \stackrel{\mathbb{Z}_{\ge 1}} {\sim} Q'$).
Since $\tau(P')=\tau(P)=\tau(Q)=\tau(Q')$ and $\tau(P')$ (resp.~$\tau(Q')$) is obtained from $P'$ (resp. $Q'$) by forgetting the orientation, we see $P' \stackrel{\{ 0 \}} {\sim} Q'$.
Thus we have $P \stackrel{\mathbb{Z}_{\ge 0}} {\sim} Q$.
\vskip 2mm
\noindent
{\bf (3) $\Rightarrow$ (2)} It is clear by the definition.
\vskip 2mm
\noindent
{\bf (2) $\Rightarrow$ (3)} The claim follows from Lemma~\ref{lemmaMi}.
\end{proof}

		
\section{Proof of Theorem~\ref{mainthm2}}\label{sec1}
Before starting the proof of Theorem~\ref{mainthm2}, we prepare some definitions and Lemma~\ref{lemmaoddeven} (for a special case, see \cite{IT}) and Lemma~\ref{lemmagooddiagram}.
The Reidemeister moves RI, RI\!I and RI\!I\!I of oriented link projections are depicted in Figure \ref{reidemeister}.

		\begin{figure}[h!]\includegraphics[width=12.5cm]{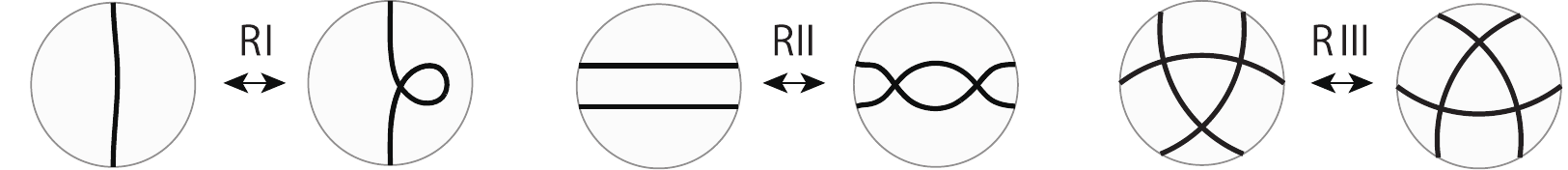}\caption{Reidemeister moves RI, RI\!I and RI\!I\!I with respect to oriented link projections.  Any orientation is allowed.}\label{reidemeister}\end{figure}

Reidemeister moves of an oriented link projection are concerned with $n$-gons ($n=1,$ $2$, $3$). Each Reidemeister move with an arc orientation for either $2$-gon or $3$-gon can be divided into two types as follows. The Reidemeister moves RI\!I and RI\!I\!I are {\it strong} (resp.~{\it weak}) if those have a coherent polygon (resp.~have no coherent polygons) as shown in Figure~\ref{strongweak}.

		\begin{figure}[h!]\includegraphics[width=12.5cm]{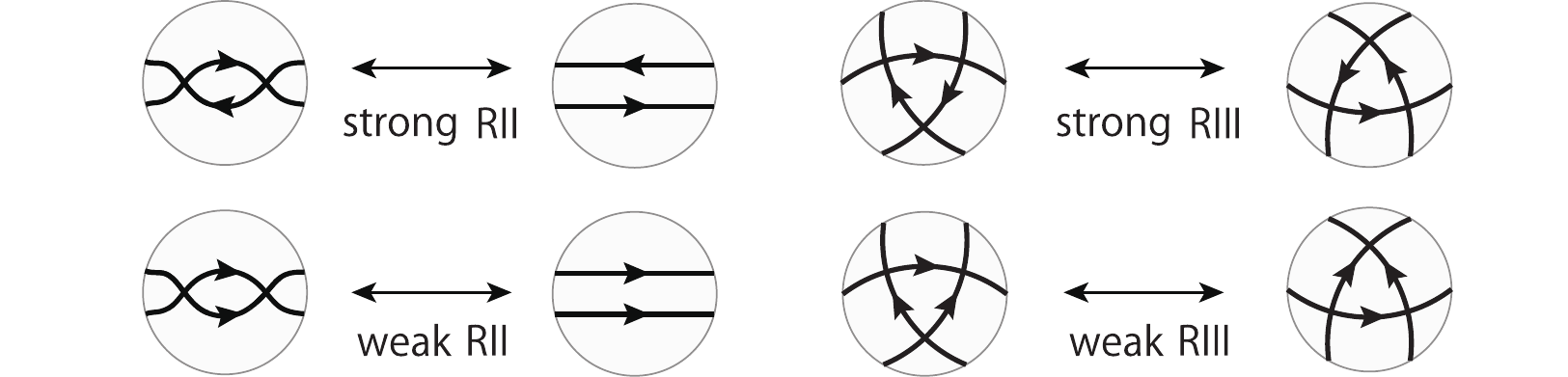}\caption{A strong RI\!I, weak RI\!I, strong RI\!I\!I and weak RI\!I\!I.  }\label{strongweak}\end{figure}
	
For a link projection with no crossing points, we temporarily define a local move $T$-move as illustrated in Figure \ref{Tmove}.
We use $T$-moves only in the following proofs.

\begin{remark}\label{henkaT}{\rm
If link projections $P$ and $P'$ with no crossing points are related by an application of $T$-move, $\# \tau(P)$ differs from $\# \tau(P')$ by $\pm 1$.
}\end{remark}

		\begin{figure}[h!]\includegraphics[width=12.5cm]{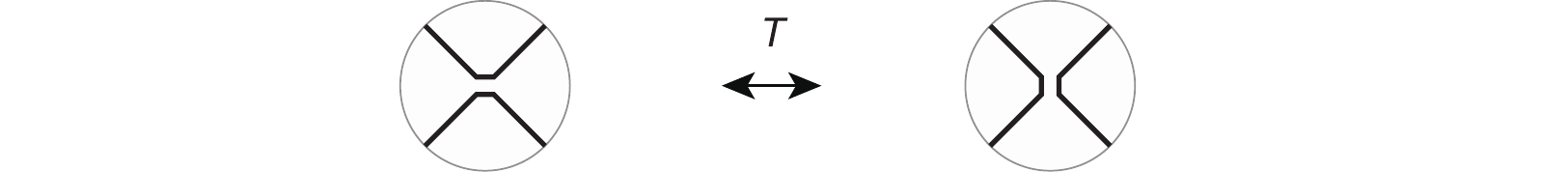}\caption{The $T$-move.}\label{Tmove}\end{figure}
		
\begin{lemma}\label{lemmaoddeven}
Let $D_1$ and $D_2$ be link diagrams and let $D$ be an $n$-component link diagram.
\begin{enumerate}
\item[(1)] If $D_1$ is related to $D_2$ by exactly one Reidemeister move,\\
then $\#\tau(D_1) - \#\tau(D_2)=0$ or $\pm 2$.
\item[(2)] If $n$ is odd $($resp.~even$)$, then $\# \tau(D)$ is odd $($resp.~even$)$.
\end{enumerate}
\end{lemma}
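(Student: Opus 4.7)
The plan is to prove (1) by a local case analysis on each type of Reidemeister move, and then to deduce (2) by combining (1) with the observation that $\tau(D)$ depends only on the underlying link projection.

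For (1), each Reidemeister move takes place inside a small disk $U \subset S^{2}$, outside of which $D_1$ and $D_2$ coincide. The boundary $\partial U$ meets the diagram in $2$ (RI), $4$ (RII), or $6$ (RIII) points. Applying $A^{-1}$ to every crossing inside $U$ yields a definite configuration of arcs joining these boundary points, together with some internal circles; the part of $\tau$ outside $U$ is identical for $D_1$ and $D_2$. The crucial observation is that $\tau(D_i)$ is a disjoint union of embedded circles in $S^{2}$, so the matching pairing up the boundary points of $U$ via external arcs is necessarily non-crossing with respect to the cyclic order on $\partial U$. Considering each move together with its coherent (``strong'') and incoherent (``weak'') orientation variants for RII and RIII, and each admissible external matching, one checks by direct combinatorial computation that $\#\tau(D_1)-\#\tau(D_2) \in \{0,\pm 2\}$ in every case. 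Concretely, RI always gives $0$; strong RII and strong RIII give $0$; weak RII gives $0$ or $\pm 2$, depending on which of the two non-crossing external matchings occurs; weak RIII is similar.

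For (2), first note that by (1), $\#\tau(D) \pmod{2}$ is preserved under any Reidemeister move. Next, since $\tau(D)=\tau(P_D)$ depends only on the projection, a crossing change at any crossing of $D$ leaves $\#\tau(D)$ unaltered. Starting from an arbitrary diagram $D$ of an $n$-component link, I will perform crossing changes on $D$ to produce a diagram $D'$, with the same projection as $D$, that represents the $n$-component trivial link: this is arranged by first layering the components by height (to remove inter-component linking) and then unknotting each component by further crossing changes. Since $D'$ represents the unlink, Reidemeister's theorem gives a sequence of Reidemeister moves from $D'$ to the trivial $n$-circle diagram $D_0$, for which $\#\tau(D_0)=n$. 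Applying (1) along this sequence preserves the parity, so $\#\tau(D)=\#\tau(D')\equiv n\pmod{2}$.

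The main obstacle is the enumeration in (1). The RIII cases are the most numerous, since $\partial U$ has six boundary points and admits $C_3=5$ non-crossing matchings, combined with the coherent/incoherent orientation dichotomy. The role of planarity in $S^{2}$ is essential: it is precisely the non-crossing condition on the external matching that eliminates the configurations that would otherwise produce an odd change in $\#\tau$, and thus ensures that the change is always in $\{0,\pm 2\}$.
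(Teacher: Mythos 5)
Your overall strategy is sound and both parts would go through, but it differs from the paper's in ways worth noting, and one entry in your claimed enumeration for (1) is actually wrong.

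For (1) the paper avoids the case explosion you describe: RI and strong RII are literally the moves $M_1$ and $M_2$, which preserve $\tau$ as an isotopy class, while a weak RII or any RIII is realized on the level of circle arrangements by exactly two $T$-moves (a $T$-move being a single resmoothing of two parallel arcs of an embedded $1$-manifold in $S^2$). Since one such resmoothing changes the number of circles by exactly $\pm 1$ (Remark~\ref{henkaT}; this is where planarity enters, exactly as you observe), two of them give $0$ or $\pm 2$, with no need to enumerate external matchings. Your enumeration scheme is valid in principle, but the summary you state is incorrect for strong RIII: a strong RIII can change $\#\tau$ by $\pm 2$. Concretely, take strands $a\colon y=x$ oriented NE to SW, $b\colon y=-x$ oriented SE to NW, and $c$ a horizontal line oriented west to east, slid from $y=-1$ to $y=+1$; the triangle is coherent, and applying $A^{-1}$ at the three crossings one finds that the internal matching of the six boundary points (in cyclic order) changes from the consecutive-pairs matching $\{12,34,56\}$ to the complementary one $\{23,45,61\}$. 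With external matching $\{12,34,56\}$ this changes three circles into one. This does not threaten the lemma, since the difference still lies in $\{0,\pm2\}$, but it shows the ``direct combinatorial computation'' was not actually carried out correctly; if you take the enumeration route you must do all the cases honestly.

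For (2) your route is genuinely different from the paper's and is a correct alternative. The paper stays at the level of projections: any two generic $n$-component projections are related by Reidemeister moves of projections, so $P_D$ connects directly to $n$ disjoint circles and (1) gives the parity. You instead stay with honest link diagrams: crossing changes do not alter the underlying projection, hence do not alter $\tau$, and they let you reach an unlink diagram by layering and descending, after which the classical Reidemeister theorem applies. Your version uses only the most standard form of Reidemeister's theorem at the cost of the extra unknotting step; the paper's is shorter but quotes the (also standard) classification of projections under Reidemeister moves.
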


		\begin{proof} \ \\
{\bf (1)} Let $P_i$ be an oriented link projection obtained by forgetting over/under information of $D_i$ ($i=1,2$).
We consider the following two cases.
\vskip 2mm
\noindent
{\bf Case 1.} $P_2$ is obtained from $P_1$ by one RI or one strong RI\!I.\\
Since RI (resp.~strong RI\!I) is $M_1$ (resp.~$M_2$), the move does not change the circle arrangement of $P_2$. Therefore $\# \tau(P_1) - \# \tau(P_2)=0$.
\vskip 2mm
\noindent
{\bf Case 2.} $P_2$ is obtained from $P_1$ by one weak RI\!I or one RI\!I\!I.\\
As illustrated in Figure \ref{weakR2_R3}, $\tau(P_1)$ is obtained from $\tau(P_2)$ by applying exactly two times applications of $T$-moves.
By Remark \ref{henkaT}, $\# \tau(P_1) - \# \tau(P_2)=0$ or $\pm 2$.

		\begin{figure}[h!]\includegraphics[width=12.5cm]{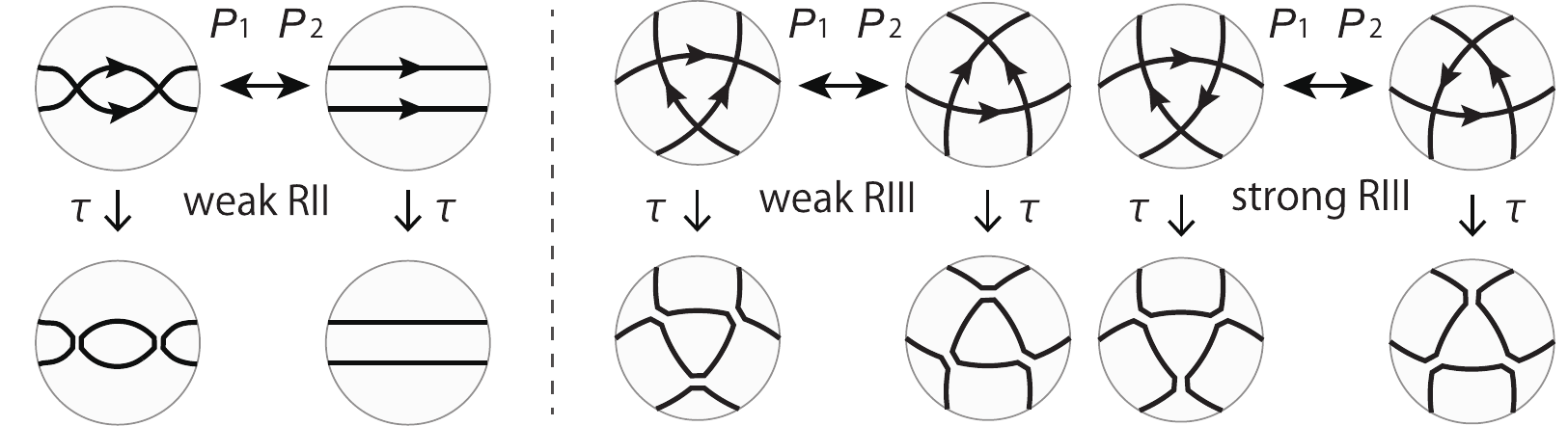}
\caption{A weak RI\!I (resp. RI\!I\!I) changes the number of circles of the circle arrangement by $0$ or $\pm 2$.}\label{weakR2_R3}\end{figure}

By Case 1 and Case 2, $\#\tau(D_1) - \#\tau(D_2)=0$ or $\pm 2$.
\vskip 1mm
\noindent
{\bf (2)} Let $P$ be an oriented link projection obtained by forgetting over/under information of $D$. Since an arbitrary pair of oriented $n$-component link projections is related by Reidemeister moves in Figure~\ref{reidemeister}, $P$ is related to an oriented link projection consisting of $n$ simple closed curves.
By Lemma \ref{lemmaoddeven} (1), $\#\tau(P)$ is odd (resp.~even) if $n$ is odd (resp.~even).\end{proof}
		
\begin{remark}\label{lemma_weak}{\rm
By Case 2 of the proof of Lemma \ref{lemmaoddeven} (1), we understand that the weak RI\!I is divided into the two types. One changes the number of circles of the circle arrangement by exactly 2 and the other does not change it.
}\end{remark}
		
For an oriented link diagram $D$ and a disk $\Delta \subset S^2$, we call $\Delta$ a {\it disk containing three arcs} if 
\begin{enumerate}
\item[$\bullet$] the intersection $\Delta \cap D$ consists of a disjoint union of three simple arcs properly embedded in $D$, 
\item[$\bullet$] these arcs are contained in mutually different components of $\tau(D)$ and
\item[$\bullet$] each arc is an outermost arc of $D$.
\end{enumerate}
In addition, if the arc orientations are as illustrated in Figure \ref{3arcs} or all inverse orientations, we call $\Delta$ a {\it disk containing three coherent arcs}. See Figure \ref{3arcs}.

		\begin{figure}[h!]\includegraphics[width=12.5cm]{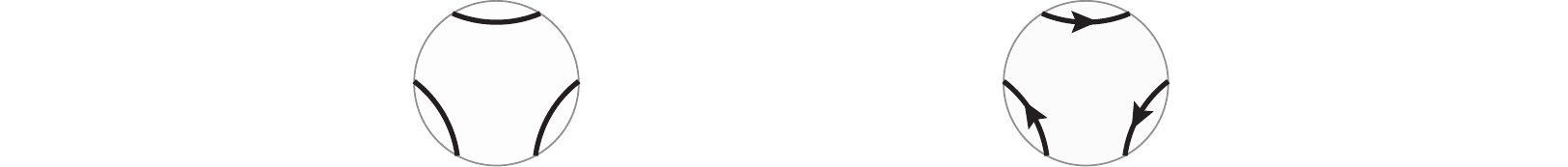}\caption{A disk containing three arcs and a disk containing three coherent arcs. These arcs belong to different components of $\tau(D)$.}\label{3arcs}\end{figure}
		
		\begin{lemma}\label{lemmagooddiagram} Let $D$ be a diagram of an oriented link $L$ with $\#\tau(D) \ge 3$. Then, there is a diagram $D'$ of $L$ such that $D'$ has a disk containing three coherent arcs and $\#\tau(D')=\#\tau(D)$.
		\end{lemma}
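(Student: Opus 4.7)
The strategy is to combine the combinatorics of the circle arrangement $\tau(D)=C_{1}\sqcup\cdots\sqcup C_{m}$ ($m\ge3$) with Reidemeister moves that preserve $\#\tau$. The $m$ disjoint circles partition $S^{2}$ into $m+1$ regions whose adjacency via shared boundary circles yields a tree $T$ with $m$ edges, and my plan is to exploit the structure of $T$.

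Suppose first that some vertex of $T$ has degree at least three, i.e., some region $R$ of $S^{2}\setminus\tau(D)$ has three distinct boundary circles $C_{i_{1}},C_{i_{2}},C_{i_{3}}$. I would construct a disk $\Delta$ whose interior lies mostly in $R$ but reaches into each of the three boundary components, so that $\partial\Delta$ crosses each $C_{i_{k}}$ transversely in exactly two points, meets no other circle of $\tau(D)$, and avoids all crossings of $D$. With care, the three chords $\Delta\cap C_{i_{k}}$ can be arranged to cut off three disjoint corners of $\Delta$, giving the ``three-corners'' configuration in which each chord is outermost and the three arcs lie in distinct circles. Since $\Delta$ misses every crossing, $\Delta\cap D=\Delta\cap\tau(D)$ and the three arcs are the ones we want.

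The complementary case is that every vertex of $T$ has degree at most two, so $T$ is a path and the circles form a linearly nested chain. A direct construction then cannot succeed: if $C_{i}\subset C_{j}$ in $S^{2}$, then for any disk $\Delta$ crossing both, the chord $\Delta\cap C_{i}$ is nested inside the chord $\Delta\cap C_{j}$, so three chords coming from three nested circles are themselves nested in $\Delta$ and at most two can be outermost. Here I would modify $D$ into a diagram $D'$ of $L$ with $\#\tau(D')=\#\tau(D)$ using only $\#\tau$-preserving Reidemeister moves (RI, strong RI\!I, and the $\#\tau$-preserving variants of the remaining moves identified in Remark \ref{lemma_weak}), arranged so as to pull an arc of an outer circle past an inner one and create a new region bounded by three circles; then the previous case applies to $D'$.

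Finally, the coherent-orientation requirement is handled once three outermost arcs from distinct circles are in place: by sliding the location of $\Delta$ along the arcs, or by applying a small local $\#\tau$-preserving modification, one aligns the induced orientations with the pattern of Figure \ref{3arcs}. The main obstacle is the nested case, specifically establishing that one can always un-nest two circles in $\tau(D)$ via $\#\tau$-preserving Reidemeister moves. A careful case analysis of how $D$ threads the strands of two adjacent nested circles, combined with Lemma \ref{lemmaoddeven} and Remark \ref{lemma_weak}, should produce the required sequence of moves.
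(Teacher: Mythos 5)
Your reformulation of the dichotomy is attractive and, as far as it goes, correct: the $m\ge 3$ circles of a realization of $\tau(D)$ cut $S^2$ into $m+1$ regions whose adjacency graph is a tree $T$; a disk containing three arcs can be produced whenever some region has three distinct boundary circles (take a thin neighborhood of a triod avoiding the crossings), and when $T$ is a path the middle one of any three circles separates the other two, so its chord in any candidate disk can never be outermost. This recovers the paper's Case 1. Two caveats there: first, the coherence of the orientations is not obtained by ``sliding $\Delta$'' --- relocating the disk does not in general change the pattern in which the three strands run through it --- but by inserting an RI kink on one strand and taking the smaller disk to meet the returning part of the loop, which reverses that arc's apparent direction; your ``small local $\#\tau$-preserving modification'' presumably means this, but it needs to be said.

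The genuine gap is exactly where you locate it: the nested (path) case, which by your own separation argument is unavoidable. You write that ``a careful case analysis \dots should produce the required sequence of moves,'' but that sequence is the entire content of the second half of the proof, and it does not come for free. Note in particular that strong RI\!I is the move $M_2$ and therefore preserves the isotopy class of $\tau$ itself, so it can never change the nesting of the circles; all of the un-nesting must be done by weak RI\!I moves, and one must both exhibit a configuration where they apply and verify that they are of the non-count-changing type of Remark \ref{lemma_weak}. The paper supplies this: choose a simple arc $\gamma$ meeting three distinct circles of $\tau(D)$ transversally and away from crossings, shorten it so that a regular neighborhood $\Delta_2$ meets $D$ in arcs $a,b_1,\dots,b_k,c$ with $a$, $c$ and the $b_i$ on three mutually distinct circles and all $b_i$ on one circle, and then push the strands past one another inside $\Delta_2$ by weak RI\!I (and RI) moves that are checked to preserve $\#\tau$, arriving at a disk containing three arcs. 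Without this construction, or an equivalent one, your argument proves the lemma only for diagrams whose circle arrangement already has a branch vertex in $T$, and the conclusion in general remains unproved.
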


		\begin{proof}
We deform $D$ by RI moves and weak RI\!I moves for oriented link diagrams.
\vskip 2mm
\noindent
{\bf Case 1.} There is a disk $\Delta_1$ containing three arcs.\\
After a RI of $D$ in $\Delta_1$ if necessary, we can find out a disk $\Delta$ containing three coherent arcs. See Figure~\ref{type1}.\\
		\begin{figure}[h!]\includegraphics[width=12.5cm]{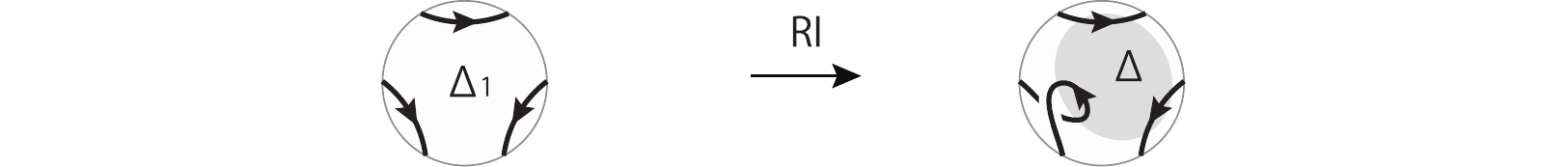}\caption{By using RI if necessary, we obtain a disk $\Delta$ containing three coherent arcs.}\label{type1}\end{figure}

\noindent
{\bf Case 2.} There are no disks containing three arcs.\\
Let $\gamma$ be a simple arc in $S^2$ that intersects three or more components of $\tau(D)$ transversally and away from the crossings of $D$. Then it is easy to see that by shortening $\gamma$ if necessary and by taking a regular neighborhood of $\gamma$, we have a disk $\Delta_2$ with the following properties.
The $\Delta_2$ contains arcs $a$, $b_1$, $\cdots$, $b_k$ and $c$ as illustrated in Figure~\ref{nonoutermost}, where the arcs $b_1$, $\cdots$, $b_k$ ($k \ge 1$) belong to the same circle in $\tau(D)$ and the arcs $a$, $b_i$ and $c$ are contained in mutually different circles in $\tau(D)$.
By using weak RI\!I moves (and RI moves if necessary) on the disk $\Delta_2$, we obtain a diagram of $L$ as illustrated in Figure~\ref{nonoutermost}, where an orientation pattern is shown as an example.

		\begin{figure}[h!]\includegraphics[width=12.5cm]{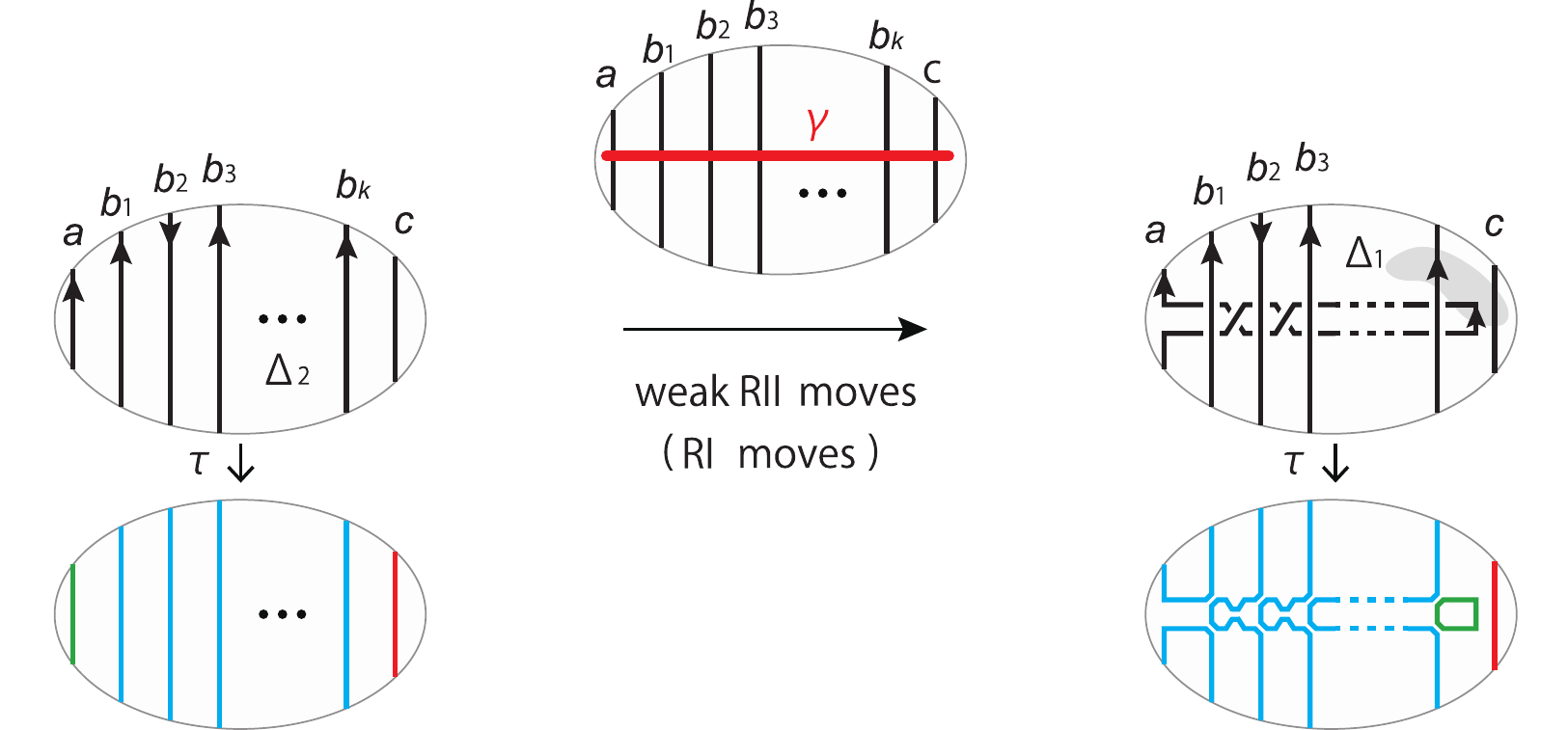}\caption{By using RI moves and weak RI\!I moves  at a disk $\Delta_2$, we have a disk $\Delta_1$ containing three arcs.}\label{nonoutermost}\end{figure}

These weak RI\!I moves do not change the number of circles contained in the circle arrangement.
See Remark \ref{lemma_weak}.
Therefore, we can find out a disk $\Delta_1$ containing three arcs and we have the condition of Case 1.\end{proof}

		\begin{proof}[Proof of Theorem \ref{mainthm2}]
Let $D_0$ be a diagram of an oriented $n$-component link $L$ with $\#\tau(D_0) \ge 3$.
By Lemma~\ref{lemmagooddiagram},
there is a diagram $D_0'$ of $L$ which has a disk $\Delta$ containing three coherent arcs with $\#\tau(D_0)=\#\tau(D_0')$.
We transform $D_0'$ into a diagram $D_1$ of $L$ by the deformation in the disk $\Delta$ as illustrated in Figure~\ref{boro} and we have $\#\tau(D_1)=\#\tau(D_0')-2$.
That is, the deformation changes the number of circles contained in the circle arrangements exactly by 2.

		\begin{figure}[h!]\includegraphics[width=12.5cm]{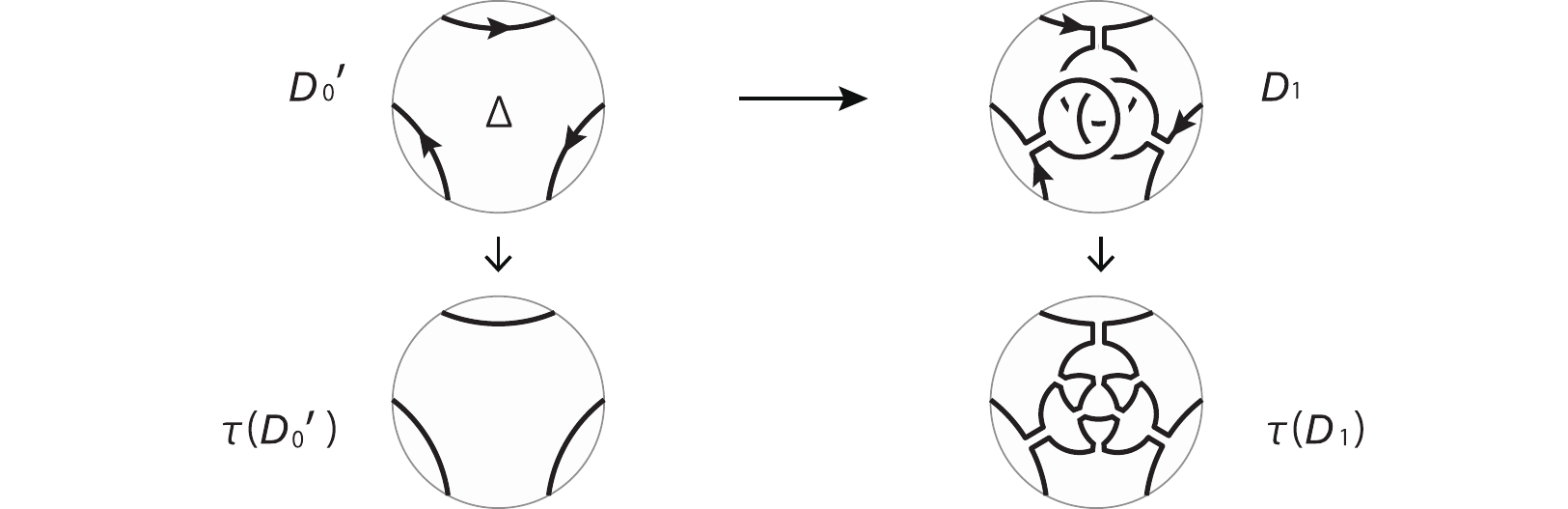}
\caption{A deformation of a diagram in a disk $\Delta$ containing three coherent arcs.}\label{boro}\end{figure}
Therefore, by using the deformations finitely many times, we can obtain a diagram $D$ of $L$ from $D_0$ where $\#\tau(D) = 2$ or 1. By Lemma~\ref{lemmaoddeven} (2), the minimum number of circles of the circle arrangement of every diagram of $L$ is 1 (resp.~2) if $n$ is odd (resp.~even). \end{proof}


\section*{Acknowledgements} 
The first author was supported by a Waseda University Grant for Special Research Projects (Project number:~2014K-6292) and the JSPS Japanese-German Graduate Externship.
He was a project researcher of Grant-in-Aid for Scientific Research (S) 24224002 (2016.4--2017.3).
The third author was partially supported by Grant-in-Aid for Challenging Exploratory Research (No.~15K13439) and Grant-in-Aid for Scientific Research(A) (No.~16H02145), Japan Society for the Promotion of Science.
The authors would like to thank Dr.~Kuniyuki Takaoka for his helpful comments.
The authors are very grateful to the referee for his/her helpful comments.

\end{document}